\def\a{\alpha}
\def\o{\omega}
\newcommand{\etal}{\textit{et al.~}}
\def\cA{{\cal A}}
\def\cH{{\cal H}}
\def\Re {\mathcal {R}e}
\def\ii{{\mbox{i}}}
\newtheorem{theorem}{Theorem}[section]
\newtheorem{lemma}{Lemma}[section]
\newtheorem{definition}{Definition}[section]
\begin{document}
\title{Sharp Stability of a String with Local Degenerate Kelvin-Voigt Damping
\thanks{
This work was supported by the National Natural Science Foundation of China (grants No. 62073236, 61873036) and Beijing Municipal Natural Science Foundation (grant No. 4182059).
\medskip} }

\author{Zhong-Jie Han$^1$,\quad Zhuangyi Liu$^2$,\quad Qiong Zhang$^3$\thanks{Corresponding author, email: zhangqiong@bit.edu.cn}\\
	$^{1}$School of Mathematics, Tianjin University\\
	Tianjin 300354,  P. R. China
	\\
	$^2$Department of Mathematics and Statistics, University of
	Minnesota\\
	Duluth, MN 55812-3000, USA \\
	$^{3}$School of Mathematics and Statistics, Beijing Key \\Laboratory
on MCAACI, Beijing Institute of Technology, \\Beijing, 100081,   P. R. China
}

\maketitle
\begin{abstract}
	This paper is on the asymptotic behavior of the elastic string equation
	with localized degenerate Kelvin-Voigt damping
	$$
	u_{tt}(x,t)-[u_{x}(x,t)+b(x)u_{x,t}(x,t)]_{x}=0,\; x\in(-1,1),\; t>0,
	$$
	where $b(x)=0$ on $x\in (-1,0]$, and $b(x)=x^\alpha>0$ on $x\in (0,1)$ for $\alpha\in(0,1)$. It is known that the optimal decay
	rate of solution is $t^{-2}$ in the limit case $\a=0$ \cite{ARSVG}, and exponential decay rate for $\a\ge 1$ \cite{LZ}. When $\a\in (0,1)$, the damping coefficient $b(x)$ is continuous, but its derivative has a singularity at the interface $x=0$. In this case, the best known decay rate is $t^{-\frac{3-\a}{2(1-\a)}}$ \cite{HLW}. Although this rate is consistent with the exponential one at $\a=1$, it failed to match the optimal one at $\a=0$.

    In this paper, we obtain a sharper polynomial decay rate $t^{-\frac{2-\a}{1-\a}}$. More significantly, it is consistent with the optimal polynomial decay rate at $\a=0$ and the exponential decay rate at $\a = 1$.
	This is a big step toward the goal of obtaining eventually the optimal decay rate.

\vskip 4mm
\noindent
 {\it Keywords:} semigroup, local Kelvin-Voigt damping, polynomial stability.

 \noindent
 {\it Mathematics Subject Classification 2000:} 35M20, 35Q72, 74D05.
\end{abstract}


\section{Introduction}
\setcounter{equation}{0} \setcounter{theorem}{0}
\setcounter{lemma}{0} \setcounter{definition}{0}
\setcounter{proposition}{0} \setcounter{remark}{0}
\setcounter{corollary}{0} \setcounter{equation}{0}

In this paper, we consider elastic string (one-dimensional wave equation) with local viscoelastic damping of Kelvin-Voigt type.  The mathematical model
is the following partial differential equation.
\begin{equation}
\label{system} \left\{
\begin{array}{lcl}
 u_{tt}(x,t) -  [u_x(x,t)+    b(x) u_{xt}(x,t)]_x = 0
 & \mbox{ in } & (-1,1)\times \mathbb{R}^+, \\ \noalign{\medskip}
  u(-1, t) = u(1, t)  =0 & \mbox{ in } & \mathbb{R}^+,   \\ \noalign{\medskip}
   u(x,0)  = u_0(x) , \quad u_t(x,0)= v_0(x) & \mbox{ in } & [0,L].
\end{array}
\right.
\end{equation}
 The damping coefficient function $b(\cdot)\,:\,(-1,\,1) \to {\mathbb R}^+\cup\{0\}$, belongs to $ L^\infty(-1,1)$  and satisfies
\begin{equation}\label{h1}
b(x) =  0  \;  \;  \mbox{ for  }  \;   x\in  [-1,\,0)    \;  \;  \mbox{ and  }  \;\;
b(x) = a(x) = x^\a    \;    \mbox{with}  \; \a\in [0, 1)  \mbox{ for  } \;\;  x\in  [0,\, 1].\;
 \tag{H1}
\end{equation}

In the theory of elasticity, Kelvin-Voigt damping is a type of viscoelastic damping which assumes that the stress is a linear function of strain and strain rate. When it is globally distributed, the solution to the corresponding elastic equation (string, beam, plate) is exponentially stable and analytic \cite{Huang}. In 1998,  Chen \etal (\cite{CLL, LL1}) discovered that the semigroup associated with the above system (\ref{system}) is not exponentially stable if the damping is locally distributed and $a(x)$ is proportional to the characteristic function of any subinterval of the domain.
This surprising result revealed that, unlike the viscous damping, the Kelvin-Voigt damping does not follow the well-known Geometric Optics Condition (\cite{BLR}).
In 2002, it was proved in \cite{LL2} that system (\ref{system}) is exponentially stable if $b'(\cdot) \in C^{0,1}[-1,1]$. Later, the smoothness condition on $b(\cdot)$ was weakened to $b(\cdot) \in C^1[-1,1]$ and the following conditions (\cite{Zhang1}):
\begin{equation}\label{qzhang}
a'(0)=0,\;  \int_0^x {{|a'(s)|^2}\over{a(s)}} \: ds  \le C |a'(x)|,\; \forall x\in [0,1], \; C>0.
\end{equation}
It is easy to see that $a(x)=x^\alpha, \;\a>1$ satisfies condition (\ref{qzhang}), but
not when $\a=1$. The exponential stability of this case was confirmed later in \cite{LZ}.
 On the other hand, Liu and Rao in 2005  \cite{ZLiuRao} proved that the semigroup corresponding to system (\ref{system}) is polynomially stable of order   at  almost $2$
if $a(\cdot) \in C(0,1)$ and $a(x)\ge \tilde{a}>0 $ on $(0,1)$, which includes the case $\a=0$ in (\ref{system}).
Actually, the order can be improved to $2$ without any modification to the proof, thanks to the necessary and sufficient conditions for polynomial stability by  Borichev and  Tomilov  \cite{BT}.
More recently, the optimality of this order was confirmed in \cite{ARSVG}.
 The same  optimal polynomial  decay rate was obtained in \cite{GNW} for the damping Kelvin--Voigt mechanism acting in  any  internal of the one-dimensional domain. We also refer to \cite{hassin}, \cite{liu-zhang-2018}, \cite{aliweb}, \cite{liu-2005}, and the references therein  for other kinds of elastic models with local Kelvin-Voigt damping.

It is reasonable to expect that the system (\ref{system}) is polynomially stable whose order depends on $\a$. To our knowledge, the best known decay rate in this case is $t^{-\frac{3-\a}{2(1-\a)}}$ \cite{HLW}. Although this rate is consistent with the exponential one at $\a=1$, it failed to match the optimal one at $\a=0$ (that is, $t^{-2}$).

For the stability of higher dimensional wave equations with local Kelvin-Voigt damping, we refer the readers to the papers  \cite{AHR},  \cite{Burq1}, \cite{BC},
\cite{KLiuRao}, \cite{zhang4}, \cite{Stahn}, \cite{hanyu},  \cite{zhang3},  \cite{zhang2} and the reference therein.

Before stating the main results, we introduce the notions of stability that we encounter in this work.
\begin{definition}
Let $A\,:\,D(A) \subset H \to H$ generate a $C_0$-semigroup $e^{tA}$   on   Hilbert space  $H$.
  The semigroup $e^{tA}$ is said to be polynomially stable of order $\gamma>0$ if
 there exists a positive constant $C $  such that
 \begin{equation}
 \label{rao2}
 \|e^{At}x_0\|_{H} \leqslant C t^{-\gamma}\|Ax_0\|_H , \quad  \forall \;t\geqslant  1, \; x_0\in {\cal D}(A).
 \end{equation}
\end{definition}

In this paper, we shall give a sharper decay rate for system (\ref{system}). In fact, we obtain  a sharper polynomial decay rate $t^{-\frac{2-\a}{1-\a}}$. More significantly, this rate is consistent with the optimal polynomial decay rate at $\a=0$ and the exponential decay rate at $\a = 1$. However,
whether this rate is optimal is still an open question.  We
would like to point out here that showing optimal polynomial decay
rate often relies on some knowledge about the spectrum of the
system. However, the spectral analysis for the asymptotic behavior
of the eigenvalues of system (\ref{system}) is a formidable task
due to the degeneracy of the highest order term in its eigensystem.
In fact, although there have been a few results on the
asymptotic behavior of the spectrum for the system with global
Kelvin-Voigt damping (see \cite{GWZ}, \cite{XM}), the asymptotic
expressions of the eigenvalues for the system with locally
Kelvin-Voigt damping remains as an difficult open problem as commented
in \cite{XM}.

This paper is organized as follows. First, we present our main results and some preliminaries in Section 2. Section 3 is devoted to the proofs of the main results.

\section{Main Results and Preliminaries}
\setcounter{equation}{0} \setcounter{theorem}{0}
\setcounter{lemma}{0} \setcounter{definition}{0}
In this section, we shall recall some results concerning the well-posedness, weighted Hardy inequality and present main results.

Let $H^1_{0}(-1,1)$ be the space
 $\{u\in H^1(-1,1)\: |$ $u(-1) =u(1)= 0\}.$
We introduce a Hilbert space
$$
 {\cal{H}} =
   H^1_0(-1,\,1) \times L^2(-1,\,1),
$$
whose inner product induced norm is given by
$$
 \|U\|_{\cal{H}}  =   \sqrt{\|u\|_{H^1_0(-1,\,1)}^2 + \|v\|_{L^2(-1,\,1)}^2} ,
 \qquad  \forall\;  U=(u,v)\in \cal{H}.
$$
Define an unbounded operator ${\cal A}: D({\cal A}) \subset
{\cal{H}}\rightarrow {\cal{H}}$ by
$$
{\cal A} U= \left( \begin{array}{c} v \\ (u'+ b v')' \end{array} \right) \quad \forall\; U
=\left( \begin{array}{c} u \\ v \end{array} \right) \in D({\cal A}),
$$
and
$$
  {\cal D}({\cal A})  =
 \Big\{(u,v)\in {\cal{H}} \: | \:
 v \in H^1_0(-1,\,1), \quad
(u'+ b v')' \in L^2(-1,\,1)\: \Big\}.
$$

\noindent Then system (\ref{system}) can be written as
\begin{equation}
\label{system1}
\frac{dU}{dt}  = {\cal A}U, \quad \forall \;t> 0, \quad  U(0)=\big(u_0 , v_0 \big)^T.
\end{equation}
It is easy to check that $\mathcal{A}$ is dissipative. Indeed,
a direct calculation yields
\begin{equation}\label{dissi}
\Re \langle \mathcal{A} U, U \rangle_{\mathcal{H}}=-\int_0^1 x^\a|v'|^2dx\leq 0.
\end{equation}

 If the coefficient
function $b(\cdot)\ge 0$ satisfies assumption \eqref{h1}, it is known (\cite{CLL}) that the following result on well-posedness of the system (\ref{system1}) hold by employing semigroup theories (see \cite{pazy}).
\begin{lemma}\label{l-2-1}
 $\mathcal{A}$  generates a contractive
$C_0$-semigroup on $\mathcal{H}$  and
\begin{equation}\label{eq}
i\mathbb{R} \subset  \rho (\mathcal{A}).
\end{equation}
\end{lemma}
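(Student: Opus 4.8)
The plan is to treat the two assertions separately: the generation statement follows from the Lumer--Phillips theorem, while the spectral inclusion \eqref{eq} I would obtain by combining a unique-continuation argument (to exclude imaginary eigenvalues) with the Fredholm alternative (to get surjectivity).

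For generation, dissipativity of $\mathcal A$ is already recorded in \eqref{dissi}, and $\mathcal A$ is densely defined, so by Lumer--Phillips it remains to prove maximality, i.e. $\mathrm{Range}(I-\mathcal A)=\mathcal H$. Given $F=(f,g)\in\mathcal H$ I would solve $(I-\mathcal A)U=F$ by eliminating $v=u-f$ and reducing the system to the scalar elliptic problem $u-\big((1+b)u'\big)'=g+f-(bf')'$ with Dirichlet data. Its weak form is governed by the sesquilinear form $B(u,\f)=\int_{-1}^1 u\bar\f\,dx+\int_{-1}^1(1+b)u'\bar\f'\,dx$ on $H^1_0(-1,1)$, which is bounded and, since $1+b\ge 1$, coercive; the right-hand side is a bounded antilinear functional because $b\in L^\infty$ and $f\in H^1_0$. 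Lax--Milgram yields a unique $u\in H^1_0$, and then $U=(u,u-f)\in D(\mathcal A)$ since $(u'+bv')'=v-g\in L^2$. The same argument with the form $\int_{-1}^1 u'\bar\f'\,dx$ (coercive by Poincar\'e) shows in addition that $0\in\rho(\mathcal A)$, which I will use below.

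For \eqref{eq} I would show that $i\b I-\mathcal A$ is both injective and surjective for every $\b\in\mathbb R$; the case $\b=0$ is already covered by $0\in\rho(\mathcal A)$, so take $\b\neq0$. Injectivity amounts to excluding imaginary eigenvalues. If $\mathcal AU=i\b U$ with $U=(u,v)$, then $v=i\b u$, and substituting into \eqref{dissi} gives $\Re\langle\mathcal AU,U\rangle=-\int_0^1 x^\a|v'|^2\,dx=\Re\big(i\b\|U\|_{\mathcal H}^2\big)=0$, so $v'=0$ on $(0,1)$. Since $v\in H^1_0\subset C[-1,1]$ and $v(1)=0$, this forces $v\equiv0$, hence $u\equiv0$, on $[0,1]$. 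On the undamped interval $(-1,0)$ the eigenvalue equation reduces to the ODE $u''+\b^2u=0$, while continuity of the flux $u'+bv'\in H^1(-1,1)$ across $x=0$ together with its vanishing on $(0,1)$ supplies the Cauchy data $u(0)=u'(0)=0$. Uniqueness for this ODE yields $u\equiv0$ on $(-1,0)$, whence $U=0$, so $i\b I-\mathcal A$ is injective.

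Surjectivity is the delicate point, because the degenerate localized damping prevents $\mathcal A^{-1}$ from being compact, so the usual ``discrete spectrum'' shortcut is unavailable. Instead I would solve $(i\b I-\mathcal A)U=F$ variationally: eliminating $v=i\b u-f$ reduces the problem to $-\b^2 u-\big((1+i\b b)u'\big)'=g+i\b f-(bf')'$, whose weak form is $a_\b(u,\f)=L(\f)$ with $a_\b(u,\f)=\int_{-1}^1(1+i\b b)u'\bar\f'\,dx-\b^2\int_{-1}^1 u\bar\f\,dx$. I would split $a_\b=a_{\mathrm{coer}}-(\b^2+1)\langle u,\f\rangle_{L^2}$, where $a_{\mathrm{coer}}(u,\f)=\int(1+i\b b)u'\bar\f'+\int u\bar\f$ has $\Re a_{\mathrm{coer}}(u,u)=\|u\|_{H^1}^2$ and hence defines an isomorphism of $H^1_0$, while the $L^2$-term defines a compact operator $K$ on $H^1_0$ via the Rellich embedding $H^1_0\hookrightarrow L^2$. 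The equation then reads $(I+K)u=\tilde\ell$ with $K$ compact, and the Fredholm alternative reduces its solvability to triviality of the homogeneous problem, which is exactly the injectivity just established. Thus $i\b I-\mathcal A$ is onto, the recovered $U$ lies in $D(\mathcal A)$ as before, and $i\b\in\rho(\mathcal A)$. I expect the main obstacle to be precisely this surjectivity step, and within it the unique-continuation passage from the damped region $(0,1)$ to the undamped region $(-1,0)$ that feeds injectivity into the Fredholm argument.
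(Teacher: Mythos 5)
Your proof is correct, but note that the paper itself does not prove Lemma \ref{l-2-1}: it simply quotes the well-posedness result from \cite{CLL} together with standard semigroup theory, so there is no in-paper argument to compare against. What you wrote is essentially the standard proof from that literature, and every step checks out: Lumer--Phillips with a Lax--Milgram solution of $(I-\mathcal{A})U=F$ for maximal dissipativity; exclusion of imaginary eigenvalues via the dissipation identity \eqref{dissi}, the vanishing of $v$ on $(0,1)$, and unique continuation of $u''+\b^2u=0$ from the Cauchy data $u(0)=u'(0)=0$ supplied by the $H^1$-continuity of the flux; and a compact-perturbation/Fredholm argument for surjectivity. The Fredholm detour is genuinely needed here, since the resolvent of $\mathcal{A}$ is not compact for localized Kelvin--Voigt damping, so you are right that one cannot invoke discreteness of the spectrum. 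Two minor points worth tightening: (i) the density of $D(\mathcal{A})$ is not immediate when $\a\le 1/2$, because $(x^{\a}v')'\notin L^2(0,1)$ for a generic smooth $v$ with $v'(0)\neq 0$; it follows either from the general fact that a dissipative operator with $\mathrm{Range}(I-\mathcal{A})=\mathcal{H}$ on a reflexive space is automatically densely defined, or by approximating $v$ in $L^2$ by smooth functions constant near $x=0$; (ii) the boundedness of $(i\b I-\mathcal{A})^{-1}$ should be recorded as a consequence of the closed graph theorem (or of the contraction-semigroup resolvent bound) once bijectivity is established.
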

We also have the following lemma which is from \cite{hanwang,LZ} deduced by the weighted Hardy's inequality (see \cite{stepanov}).

	\begin{lemma}\label{l-2-2}
  Assume that $\beta>-1$ and $\alpha<1$ are two constants. Then, there is $C=C(\alpha,\beta)>0$ so that
	\begin{eqnarray}\label{m2mm}
	 \int_{0}^{1}x^{\beta}|\xi(x)|^{2}dx \leq C \int_{0}^{1}x^{\alpha}|\xi_x(x)|^{2}dx,
	\end{eqnarray}
	for any $\xi(x)\in W^{1,1}(0,1)$ satisfying that $ x^{\frac{\alpha}{2}}\xi_x(x) \in L^{2}(0,1)$ and $\xi(1)=0$.
\end{lemma}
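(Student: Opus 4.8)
The plan is to prove this as a direct weighted Hardy estimate, exploiting the boundary condition $\xi(1)=0$ to write $\xi$ as the integral of its derivative and then absorbing the weight $x^{\alpha}$ through a single application of the Cauchy--Schwarz inequality. We may assume the right-hand side of \eqref{m2mm} is finite, i.e.\ $I:=\int_0^1 t^{\alpha}|\xi_t(t)|^2\,dt<\infty$, since otherwise there is nothing to prove. Because $\xi\in W^{1,1}(0,1)$ it has an absolutely continuous representative, so the fundamental theorem of calculus together with $\xi(1)=0$ gives $\xi(x)=-\int_x^1\xi_t(t)\,dt$ for every $x\in(0,1]$.

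First I would estimate $|\xi(x)|$ pointwise. Splitting the integrand as $\xi_t(t)=t^{-\alpha/2}\cdot t^{\alpha/2}\xi_t(t)$ and applying Cauchy--Schwarz on $(x,1)$ yields
\begin{equation*}
|\xi(x)|^2\le\Big(\int_x^1 t^{-\alpha}\,dt\Big)\Big(\int_x^1 t^{\alpha}|\xi_t(t)|^2\,dt\Big)\le\frac{1}{1-\alpha}\,I,
\end{equation*}
where the first integral is finite \emph{precisely because} $\alpha<1$, and is in fact bounded by $\int_0^1 t^{-\alpha}\,dt=1/(1-\alpha)$ uniformly in $x$. This already shows $\xi\in L^{\infty}(0,1)$ with a bound controlled by $I$.

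The second step is to integrate this uniform bound against the weight $x^{\beta}$. Since $\beta>-1$ we have $\int_0^1 x^{\beta}\,dx=1/(\beta+1)<\infty$, and therefore
\begin{equation*}
\int_0^1 x^{\beta}|\xi(x)|^2\,dx\le\Big(\sup_{0<x<1}|\xi(x)|^2\Big)\int_0^1 x^{\beta}\,dx\le\frac{1}{(1-\alpha)(\beta+1)}\,I,
\end{equation*}
which is exactly \eqref{m2mm} with $C=C(\alpha,\beta)=1/[(1-\alpha)(\beta+1)]$.

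There is no serious analytic obstacle here; the two hypotheses enter transparently, with $\alpha<1$ guaranteeing integrability of $t^{-\alpha}$ near the degeneracy at $0$ and $\beta>-1$ guaranteeing integrability of the outer weight. The only point requiring a little care is the justification of the representation $\xi(x)=-\int_x^1\xi_t\,dt$, namely that the $W^{1,1}$ hypothesis legitimizes the fundamental theorem of calculus and gives meaning to the boundary value $\xi(1)=0$. I note that a sharper constant could be obtained through the Muckenhoupt/Stepanov characterization of weighted Hardy inequalities, for which one would instead verify finiteness of $\sup_{0<r<1}\big(\int_0^r x^{\beta}\,dx\big)^{1/2}\big(\int_r^1 x^{-\alpha}\,dx\big)^{1/2}$; but for the qualitative inequality stated here the elementary uniform-sup argument above is already sufficient.
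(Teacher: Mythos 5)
Your proof is correct, and it takes a genuinely more elementary route than the paper, which does not prove Lemma \ref{l-2-2} at all but imports it from \cite{hanwang,LZ} as a consequence of the weighted Hardy inequality of \cite{stepanov}. Your key observation is that under the stated hypotheses the full Hardy machinery is unnecessary: since $\alpha<1$, the factor $\int_x^1 t^{-\alpha}\,dt$ produced by Cauchy--Schwarz is bounded by $1/(1-\alpha)$ \emph{uniformly} in $x$, so the representation $\xi(x)=-\int_x^1\xi_t\,dt$ immediately gives $\|\xi\|_{L^\infty}^2\le (1-\alpha)^{-1}\int_0^1 t^\alpha|\xi_t|^2\,dt$, and then $\beta>-1$ lets you integrate the constant bound against $x^\beta$. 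This is exactly the degenerate regime of the Muckenhoupt/Stepanov criterion $\sup_{0<r<1}\bigl(\int_0^r x^{\beta}dx\bigr)^{1/2}\bigl(\int_r^1 x^{-\alpha}dx\bigr)^{1/2}<\infty$, in which both factors are separately bounded, and you correctly note this. What your approach buys is a short, self-contained proof with an explicit constant $C=1/[(1-\alpha)(\beta+1)]$; what the citation route buys is nothing extra for this particular lemma, though the general Hardy inequality would be needed if one wanted the estimate under weaker assumptions on the weights (e.g.\ $\beta$ closer to the critical Muckenhoupt threshold rather than $\beta>-1$). Your handling of the $W^{1,1}$ hypothesis and the boundary value $\xi(1)=0$ via the absolutely continuous representative is also the right justification for the fundamental theorem of calculus step.
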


Our subsequent findings on polynomial stability will reply on the following result from  \cite{BT}, which gives necessary and sufficient conditions for a semigroup to be polynomially  stable.

\begin{lemma}\label{lemma-bt}
Let $A\,:\,D(A) \subset H \to H$ generate a bounded $C_0$-semigroup $e^{tA}$ on   Hilbert space  $H$.
Assume that
\begin{equation}
    \label{lem-spec}
 i\,\mathbb{R}\subset \rho(A).
   \end{equation}
Then the semigroup $e^{tA}$ is  polynomially stable of order $\displaystyle {1\over\theta} $ if and only if
   \begin{equation}
   \label{lem-poly}
\varlimsup\limits_{\omega\in \mathbb{R},|\omega|\to \infty}|\omega|^{- \theta}  \big\|(i\,\omega I-A)^{-1}\big\|_{{\cal L}(H)} < \infty.
 \end{equation}
 \end{lemma}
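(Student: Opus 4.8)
The statement is precisely the Hilbert-space characterization of polynomial decay of Borichev and Tomilov \cite{BT}; within the paper it suffices to invoke that reference, but let me outline how I would prove it. Write $R(\lambda)=(\lambda I-A)^{-1}$, and note that since $0\in\rho(A)$ the operator $A^{-1}$ is bounded, so the decay estimate \eqref{rao2} of order $1/\theta$ is equivalent to the uniform operator bound $\|e^{tA}A^{-1}\|_{\mathcal L(H)}\le C\,t^{-1/\theta}$ for $t\ge 1$. The plan is to treat the two implications of the equivalence separately.

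The necessity direction (decay $\Rightarrow$ resolvent bound) is the softer one and I would obtain it from the Laplace-transform representation of the resolvent. For $\mathrm{Re}\,\lambda>0$ one has $R(\lambda)=\int_0^\infty e^{-\lambda t}e^{tA}\,dt$, and because $e^{tA}$ is bounded with $i\mathbb{R}\subset\rho(A)$ this extends continuously up to $\lambda=i\omega$. Feeding into this representation the two pieces of information $\|e^{tA}\|\le M$ and $\|e^{tA}A^{-1}\|\le C\,t^{-1/\theta}$—interpolated by a moment inequality so as to control intermediate quantities such as $e^{tA}A^{-s}$—produces $\|R(i\omega)\|\le C|\omega|^{\theta}$ for large $|\omega|$, which is exactly \eqref{lem-poly}.

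The sufficiency direction (resolvent bound $\Rightarrow$ decay) is the substantial one. First I would use analyticity to propagate the boundary estimate $\|R(i\omega)\|\le C|\omega|^{\theta}$ into a left neighbourhood of the imaginary axis: the Neumann series $R(\lambda)=R(i\omega)\sum_{k\ge 0}\bigl((i\omega-\lambda)R(i\omega)\bigr)^k$ converges whenever $|i\omega-\lambda|<\|R(i\omega)\|^{-1}\sim|\omega|^{-\theta}$, so $R$ is analytic and polynomially bounded on a region of the form $\{\lambda:\ \mathrm{Re}\,\lambda\ge -c\,(1+|\mathrm{Im}\,\lambda|)^{-\theta}\}$. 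Then, for $x$ in a suitable dense subspace, I would represent $e^{tA}A^{-1}x$ by an inverse-Laplace contour integral and shift the contour from the imaginary axis onto the boundary of this region, trading the factor $e^{(\mathrm{Re}\,\lambda)t}$ against the polynomial growth of the resolvent and optimising in the depth of the shift.

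The main obstacle is extracting the \emph{sharp} exponent $1/\theta$ rather than a rate weakened by a logarithmic factor: a crude contour shift of the kind just described is what one carries out on a general Banach space, and there it loses a power of $\log t$. The decisive feature of \cite{BT}, and the step I expect to be hardest to make rigorous, is that the Hilbert-space structure removes this loss via Plancherel's theorem. Concretely, I would study the $L^2$-in-time behaviour of the orbits $t\mapsto e^{tA}A^{-1}x$ and transfer it to the imaginary axis, where the resolvent bound $\|R(i\omega)\|\le C|\omega|^{\theta}$ supplies the matching frequency-side control, thereby converting the weight $|\omega|^{\theta}$ into the temporal rate $t^{-1/\theta}$. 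Making this precise—regularising so that the relevant frequency integrals converge, justifying the contour representation on a dense domain, and upgrading an $L^2$-in-time bound to the pointwise estimate \eqref{rao2}—is where the genuine work lies, and is exactly the content that \cite{BT} provides.
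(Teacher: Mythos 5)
The paper offers no proof of this lemma at all --- it is quoted verbatim as the Borichev--Tomilov characterization and justified solely by the citation \cite{BT}, which is exactly what you identify as sufficient. Your accompanying sketch (necessity via the Laplace-transform representation of the resolvent, sufficiency via a Neumann-series extension of the resolvent bound into a left neighbourhood of $i\mathbb{R}$, a contour shift, and Plancherel's theorem to remove the logarithmic loss that the general Banach-space argument incurs) is a faithful outline of the argument actually given in \cite{BT}, so there is nothing to correct.
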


Our main result in this paper is the following.
\begin{theorem} \label{th-main}
	Assume that the damping coefficient $b(x)$ in (\ref{system}) satisfies the condition $(H1)$. Then system (\ref{system}) is polynomially stable of order $\frac{1-\a}{2-\a}$, i.e.,
		\begin{equation}\label{decay}
		\|e^{\mathcal{A}t}U_0\|_{\mathcal{H}}\leq
		C{t^{-{{2-\alpha}\over{1-\alpha}}}}\|U_0\|_{\mathcal{D}(\mathcal{A})}, \quad \forall \; U_0\in\mathcal{D}(\mathcal{A}),  \qquad t\ge 1,
		\end{equation}
where the constant $C>0$ is independent of $U_0$.
\end{theorem}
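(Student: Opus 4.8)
The plan is to invoke the Borichev--Tomilov criterion (Lemma \ref{lemma-bt}): since $\mathcal{A}$ generates a contraction (hence bounded) $C_0$-semigroup with $i\mathbb{R}\subset\rho(\mathcal{A})$ by Lemma \ref{l-2-1}, the decay \eqref{decay} follows once we prove the resolvent bound $\|(i\omega I-\mathcal{A})^{-1}\|_{\mathcal{L}(\mathcal{H})}\le C|\omega|^{\theta}$ for $|\omega|\ge 1$ with $\theta=\frac{1-\alpha}{2-\alpha}$ (so that $1/\theta=\frac{2-\alpha}{1-\alpha}$). Fix $F=(f,g)\in\mathcal{H}$, set $U=(u,v)=(i\omega I-\mathcal{A})^{-1}F\in\mathcal{D}(\mathcal{A})$, and aim for $\|U\|_{\mathcal{H}}\le C|\omega|^{\theta}\|F\|_{\mathcal{H}}$. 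The resolvent equation reads $i\omega u-v=f$, $i\omega v-(u'+bv')'=g$; writing $z=u'+bv'$ and eliminating $v=i\omega u-f$ gives $-\omega^2u-z'=g+i\omega f=:G$ on $(-1,1)$, with $z=u'$ on $(-1,0)$, $z=(1+i\omega x^\alpha)u'-x^\alpha f'$ on $(0,1)$, and $z$ continuous at the interface, so $u'(0^-)=u'(0^+)=z(0)$. Testing $(i\omega I-\mathcal{A})U=F$ against $U$ and taking real parts gives the dissipation identity; I set $D:=\int_0^1 x^\alpha|v'|^2\,dx=\Re\langle F,U\rangle\le\|F\|_{\mathcal{H}}\|U\|_{\mathcal{H}}$. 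Applying the weighted Hardy inequality (Lemma \ref{l-2-2}) with $\beta=0$ to $v|_{(0,1)}$, which satisfies $v(1)=0$, controls $v$ on the damped region, $\|v\|_{L^2(0,1)}^2\le CD$, and estimating $v(0)=-\int_0^1 v'$ with the splitting $x^{-\alpha/2}x^{\alpha/2}$ yields the interface trace $|v(0)|^2\le CD$.

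Next I would use Rellich-type multipliers region by region. On $(-1,0)$ multiply the $v$-equation by $2(x+1)\overline{u'}$; after rewriting $i\omega v\,\overline{u'}=-v\overline{v'}-v\overline{f'}$ via $v=i\omega u-f$ and integrating by parts, the boundary terms at $x=-1$ vanish and one obtains the exact identity $\int_{-1}^0(|u'|^2+|v|^2)\,dx=|u'(0)|^2+|v(0)|^2+R$, where $R$ gathers the forcing pairings $\Re\int 2(x+1)v\overline{f'}$ and $\Re\int 2(x+1)g\overline{u'}$, each bounded by $\epsilon\|U\|_{\mathcal{H}}^2+C_\epsilon\|F\|_{\mathcal{H}}^2$ through Young's inequality. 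Since the undamped part is the dominant contribution to $\|U\|_{\mathcal{H}}^2$ and $|v(0)|^2\le CD\le C\|F\|_{\mathcal{H}}\|U\|_{\mathcal{H}}$, the entire estimate reduces to a single quantity, the interface flux $|u'(0)|^2=|z(0)|^2$; the target is $|u'(0)|^2\le C|\omega|^{2\theta}\|F\|_{\mathcal{H}}^2$ modulo terms absorbable by the left-hand side.

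The crux is to control $z(0)$ through the degenerate layer, and here the scale $\ell:=\omega^{-1/(2-\alpha)}$ is decisive, the governing identity being $\omega^2\ell^2=\omega^{2\theta}$. I would split $(0,1)=(0,\ell)\cup(\ell,1)$. On $(\ell,1)$ the weight is nondegenerate, $x^\alpha\ge\ell^\alpha=\omega^{-\alpha/(2-\alpha)}$, so from $u'=-i(v'+f')/\omega$ and the dissipation one gets $\int_\ell^1|u'|^2\le C\omega^{-2}(\ell^{-\alpha}D+\|F\|_{\mathcal{H}}^2)$, which is lower order. On the layer $(0,\ell)$ I would use $u'=z-x^\alpha v'$ with $z'=-\omega^2u-G$: the bound $|z(x)|\le|z(0)|+\int_0^\ell|z'|$ together with $\int_0^\ell|z'|\le C\omega\ell^{1/2}(D^{1/2}+\|F\|_{\mathcal{H}})$ propagates the flux across the layer and, after integrating, produces the characteristic gain $\int_0^\ell|z|^2\le 2\ell|z(0)|^2+C\omega^2\ell^2(D+\|F\|_{\mathcal{H}}^2)=2\ell|z(0)|^2+C\omega^{2\theta}(D+\|F\|_{\mathcal{H}}^2)$, whence $\int_0^\ell|u'|^2\le C\ell|z(0)|^2+C\omega^{2\theta}(D+\|F\|_{\mathcal{H}}^2)+C\ell^\alpha D$. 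The singular cross terms, those carrying the weight $x^{\alpha-1}$ that appears when the damped multiplier meets the degenerate coefficient, are absorbed using Lemma \ref{l-2-2}, including with the exponent $\beta=-\alpha>-1$, which bounds $\int_0^1 x^{-\alpha}|f|^2$ by $C\|f'\|^2$ and lets the $i\omega f$ part of every forcing pairing be controlled by $\|F\|_{\mathcal{H}}$; the auxiliary bound $\int_0^1 x^\alpha|u'|^2\le C\omega^{-1}\|F\|_{\mathcal{H}}\|U\|_{\mathcal{H}}$, read off from the imaginary part of the $\overline{u}$-tested equation, closes the remaining weighted terms.

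The final step is to combine the damped-region multiplier identity, with a weight vanishing at $x=1$ and equal to $-1$ at $x=0$ so that $|u'(0)|^2$ is produced with the correct sign, with the layer estimates, and then absorb: terms $\ell|u'(0)|^2$ are swallowed by the left-hand side since $\ell\to0$, terms $C\|F\|_{\mathcal{H}}\|U\|_{\mathcal{H}}$ and $\epsilon\|U\|_{\mathcal{H}}^2$ are absorbed by Young's inequality, and what remains is exactly $\|U\|_{\mathcal{H}}^2\le C\omega^{2\theta}\|F\|_{\mathcal{H}}^2$, i.e.\ $\|U\|_{\mathcal{H}}\le C|\omega|^{\theta}\|F\|_{\mathcal{H}}$, which is the desired estimate. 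I expect the main obstacle to be precisely the control of $|u'(0)|^2$ across the degenerate layer: a direct Rellich multiplier on $(0,1)$ with an $O(1)$ weight at the interface generates a term of size $\omega\|F\|_{\mathcal{H}}\|U\|_{\mathcal{H}}$, overshooting the target by the factor $\omega^{1-\theta}=\omega^{1/(2-\alpha)}$; recovering this factor by localizing and weighting the multiplier at the scale $\ell=\omega^{-1/(2-\alpha)}$ and repeatedly invoking the weighted Hardy inequality to tame the $x^{\alpha-1}$ singularities is the heart of the argument and the source of the improvement over the rate $t^{-\frac{3-\alpha}{2(1-\alpha)}}$ of \cite{HLW}.
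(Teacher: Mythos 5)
Your overall architecture coincides with the paper's: the Borichev--Tomilov criterion with $\theta=\frac{1-\alpha}{2-\alpha}$, the dissipation quantity $D=\int_0^1x^\alpha|v'|^2dx$, the weighted Hardy inequality for the trace and $L^2$ bounds on $v$, the Rellich multiplier $(x+1)\overline{u'}$ on the undamped side, and the critical scale $\ell=\omega^{-1/(2-\alpha)}$ for transferring the interface flux $z(0)=T(0)$ across the degenerate layer (the paper's $\xi_n=\omega_n^{-\delta}$, $\delta=\frac{1}{2-\alpha}$, in Lemma \ref{L-3-2}). The gap sits exactly at the step that produces the improvement over \cite{HLW}. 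Since $z'=i\omega v-g$, your propagation estimate $\int_0^\ell|z'|\le C\omega\ell^{1/2}\big(D^{1/2}+\|F\|_{\mathcal H}\big)$ rests on the basic bound $\|v\|_{L^2(0,1)}^2\le CD$. That is not strong enough: the flux-transfer term $\omega\ell^{1/2}\|v\|_{L^2(0,\ell)}$ then contributes $\omega^2\ell D=\omega^{2\theta+\delta}D$ to $|z(0)|^2$, which cannot be absorbed into $\epsilon\|U\|_{\mathcal H}^2+C\omega^{2\theta}\|F\|_{\mathcal H}^2$; in the paper's normalization ($D=\omega_n^{-\theta}o(1)$, and $\delta+\theta=1$) this term is $\omega_n^{1/2}o(1)$, not $o(1)$. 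Your closing paragraph defers precisely this difficulty to an unspecified ``localized and weighted multiplier,'' so the crux of the proof is not actually supplied; optimizing your scheme as written only closes with the larger exponent $\theta=\frac{2(1-\alpha)}{2-\alpha}$, i.e.\ a strictly weaker decay rate than claimed.

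What the paper adds, and what your proposal is missing, is its Lemma \ref{l3-3}: a second multiplier $x^{2-\alpha}\overline{v_{2,n}}$ applied to the damped equation, whose imaginary part gives $\omega_n\int_0^1x^{2-\alpha}|v_{2,n}|^2dx=\omega_n^{-1-\theta}o(1)$ after the cross terms are controlled by Hardy with exponent $\beta=2-3\alpha>-1$ and by the dissipation. Combined with the interpolation $\|v_{2,n}\|_{L^2(0,1)}^2\le 2\big(\int_0^1x^\alpha|v_{2,n}'|^2dx\big)^{1/2}\big(\int_0^1x^{2-\alpha}|v_{2,n}|^2dx\big)^{1/2}$, obtained from $\int_0^1|v|^2=-2\Re\int_0^1xv'\overline v\,dx$, this upgrades $\|v\|_{L^2(0,1)}^2$ from $\omega^{-\theta}o(1)$ to $\omega^{-1-\theta}o(1)$ --- exactly the extra factor $\omega^{-1}$ needed so that $\omega_n\big|\int_0^{\xi_n}v_{2,n}dx\big|=o(1)$ and hence $|T_n(0)|=o(1)$. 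You need this (or an equivalent weighted decay of $v$ near $x=0$) before your absorption argument can deliver $\|U\|_{\mathcal H}\le C\omega^\theta\|F\|_{\mathcal H}$ with $\theta=\frac{1-\alpha}{2-\alpha}$; the remaining parts of your outline (the $(\ell,1)$ estimate, the undamped-region identity, the final absorption) do match the paper and are sound.
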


\section{Estimate for the Resolvent Operator(Proof of Theorem \ref{th-main})}
\setcounter{equation}{0} \setcounter{theorem}{0}
\setcounter{lemma}{0} \setcounter{definition}{0}

In this section, we shall  prove Theorem \ref{th-main}.
Due to Lemma \ref{lemma-bt}, along with Lemma \ref{l-2-1}, this is equivalent to show that there exists constant $r>0$ such that
\begin{equation}\label{huang2}
\inf\limits_{\|U\|_{\cal H}=1,\: \o\in{\mathbb R} } \o^\theta\|\ii\o U-{\cal A}U\|_{\cal H}  \ge r
\end{equation}
for the parameter
\begin{equation}
\label{p-beta}
\theta = {1-\alpha\over 2-\alpha}.
\end{equation}

Suppose that (\ref{huang2}) fails. Then there exist a
sequence of real numbers $\omega_n $  with $\omega_n\to \infty$   (suppose $\omega_n>1$ without  losing the generality) and a sequence of
functions
$\{U_n\}_{n=1}^\infty  = \{(u_n, v_{n})\}_{n=1}^\infty\subset D(\cA)$ with $\|U_n\|_{\cH}=1$ such that
\begin{equation}
\label{ex}
\omega_n^\theta\|\ii\omega_n U_n- \cA U_n\|_{\cH} =o(1),
\end{equation}
i.e.,
\begin{align}
\label{ex01}
& \omega_n^\theta\big(\mbox{i} \omega_n u_{n} -  v_n\big)
=o(1) &&\mbox{in}\; \;\;H^1(-1,\,1),\\ \noalign{\medskip}
\label{ex02}
&  \omega_n^\theta\big( \mbox{i} \omega_n v_{n} -  (u_n'+ x^\a v_n')'\big)
=o(1)&&\mbox{in}\;\;\;L^2(-1,\,1).
\end{align}

\noindent
Let
$$
u_{1,n} \doteq u_n\chi_{[-1, 0]},\quad v_{1,n} \doteq v_n\chi_{[-1, 0]},
$$
and
$$
u_{2,n} \doteq u_n\chi_{[0,1]},\quad v_{2,n} \doteq v_n\chi_{[0,1]},  \quad  T_n = u'_{2,n} +  x^\a v'_{2,n}.
$$
Then, by (\ref{ex}), one has
\begin{align}
\label{ex3}
& g_{1,n}\doteq \omega_n^\theta\big( \mbox{i} \omega_n u_{1,n} -  v_{1,n}\big)
=o(1)&&\mbox{in}\;\;\;H^1(-1,0),\\ \noalign{\medskip}
\label{ex4}
& g_{2,n}\doteq \omega_n^\theta\big( \mbox{i} \omega_n v_{1,n} -  u''_{1,n}\big)
=o(1) &&\mbox{in} \;\;\;L^2(-1,0),\\\noalign{\medskip}
\label{ex1}
& f_{1,n}\doteq \omega_n^\theta\big( \mbox{i} \omega_n u_{2,n} -  v_{2,n}\big)
=o(1) &&\mbox{in}\;\;\;H^1(0,1),\\ \noalign{\medskip}
\label{ex2}
& f_{2,n}\doteq  \omega_n^\theta\big(\mbox{i} \omega_n v_{2,n} -  T'_n\big)
=o(1) &&\mbox{in}\; \;\;L^2(0,1),
\end{align}
and the following connecting boundary conditions:
\begin{align}\label{b1}
T_n(0) = u_{1,n}'(0), \quad    u_{1,n}(0) = u_{2,n}(0), \quad  v_{1,n}(0) = v_{2,n}(0).
\end{align}

By (\ref{dissi}) and (\ref{ex}), we obtain that
\begin{equation}\label{311}
\Re\langle \omega_n^\theta(\ii\omega_n U_n- \cA U_n), U_n\rangle_{\mathcal{H}}=\omega_n^\theta \|  x^{\frac{\a}{2}}  v_{2,n}'\|_{L^2(0,1)}^2=o(1),
\end{equation}
which along with (\ref{ex1}) yields
\begin{equation}\label{312}
\|  x^{\frac{\a}{2} } u_{2,n}'\|_{L^2(0,1)}=\omega_n^{-1-\frac{\theta}{2}}o(1).
\end{equation}
\medskip
In what follows, we shall  reach a contradiction by showing $\|U_n\|_{\cal H}=o(1)$.
\begin{lemma} For $n\to \infty,$ one has the following estimates
	\begin{equation}
	| v_{2,n}(0)|= \omega_n^{-{\theta\over2}}o(1),\quad \|v_{2,n}\|_{L^2(0,1)}=\omega_n^{-{\theta\over2}}o(1).
	\label{v-uni}
	\end{equation}
\end{lemma}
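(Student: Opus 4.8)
The plan is to read off both estimates from the single dissipation bound (\ref{311}), which gives $\omega_n^\theta\|x^{\alpha/2}v_{2,n}'\|_{L^2(0,1)}^2=o(1)$, equivalently
\begin{equation*}
\|x^{\alpha/2}v_{2,n}'\|_{L^2(0,1)} = \omega_n^{-\theta/2}\,o(1).
\end{equation*}
The whole point is that, since $\alpha<1$, the degenerate weight $x^{\alpha}$ is mild enough that control of $\int_0^1 x^{\alpha}|v_{2,n}'|^2\,dx$ already controls $v_{2,n}$ itself, both at the interface $x=0$ and in $L^2(0,1)$. I would extract the two stated bounds separately: the first by a direct Cauchy--Schwarz argument, and the second by the weighted Hardy inequality of Lemma \ref{l-2-2}.

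For the interface value, I would use that $v_{2,n}\in H^1(0,1)$ with $v_{2,n}(1)=0$ (inherited from $v_n\in H^1_0(-1,1)$), so that
\begin{equation*}
v_{2,n}(0) = -\int_0^1 v_{2,n}'(x)\,dx = -\int_0^1 x^{-\alpha/2}\,\bigl(x^{\alpha/2}v_{2,n}'(x)\bigr)\,dx.
\end{equation*}
Applying Cauchy--Schwarz and using that $\int_0^1 x^{-\alpha}\,dx = (1-\alpha)^{-1}<\infty$ precisely because $\alpha<1$, one obtains
\begin{equation*}
|v_{2,n}(0)| \le \Bigl(\tfrac{1}{1-\alpha}\Bigr)^{1/2}\,\|x^{\alpha/2}v_{2,n}'\|_{L^2(0,1)} = \omega_n^{-\theta/2}\,o(1),
\end{equation*}
which is the first assertion.

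For the $L^2$ bound, I would apply Lemma \ref{l-2-2} with $\beta=0$ (so that $\beta>-1$ and $\alpha<1$ are satisfied) and $\xi=v_{2,n}$; this is legitimate since $v_{2,n}\in H^1(0,1)\subset W^{1,1}(0,1)$, $x^{\alpha/2}v_{2,n}'\in L^2(0,1)$, and $v_{2,n}(1)=0$. It yields
\begin{equation*}
\|v_{2,n}\|_{L^2(0,1)}^2 \le C\int_0^1 x^{\alpha}|v_{2,n}'|^2\,dx = C\,\omega_n^{-\theta}\,o(1),
\end{equation*}
and taking square roots gives the second assertion. I do not expect a genuine obstacle here; the only thing one must watch is that both steps rest essentially on $\alpha<1$ (integrability of $x^{-\alpha}$ for the Cauchy--Schwarz step, and applicability of the Hardy inequality), so the argument degenerates exactly as $\alpha\to 1$, consistent with the transition to exponential decay. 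Note also that neither estimate uses the equations (\ref{ex1})--(\ref{ex2}) directly; both are consequences of dissipativity alone, which is why they can be placed first and reused to close the later, harder estimates.
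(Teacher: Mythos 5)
Your proof is correct and follows essentially the same route as the paper: the interface bound via Cauchy--Schwarz against the weight $x^{-\alpha/2}\cdot x^{\alpha/2}$ using $\int_0^1 x^{-\alpha}\,dx<\infty$ is exactly the paper's argument at $x=0$. The only cosmetic difference is in the $L^2$ bound, where you invoke Lemma \ref{l-2-2} with $\beta=0$ while the paper simply integrates the pointwise Cauchy--Schwarz estimate over $(0,1)$; the two are equivalent here, since that integration is precisely a direct proof of the Hardy inequality in this special case.
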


\begin{proof}
	A direct computation gives that when $0\le\alpha <1$,
	\begin{equation}\label{314}
	|v_{2,n}(x)|  = \Big|\int_x^1 v_{2,n}'dx \Big| \le  \Big(\int_x^1 x^\alpha | v_{2,n}'|^2 dx \Big)^{1\over2}
	\Big(\int_x^1 x^{-\alpha} dx \Big)^{1\over2}.
	\end{equation}
	Combining this with \eqref{311} and letting $x=0$, we obtain the first estimate in  \eqref{v-uni}.
	
	Moreover, by  (\ref{314}),  along with (\ref{311}), we have
	\begin{equation}
	\int_0^1 |v_{2,n}| ^2dx\leq \int_0^1 \Big(\int_x^1 x^\alpha | v_{2,n}'|^2 dx \Big)
	\Big(\int_x^1 x^{-\alpha} dx \Big)dx=\omega_n^{-\theta}o(1).
	\end{equation}
	Thus, the second estimate in (\ref{v-uni}) follows.
\end{proof}

\vskip 4mm

\begin{lemma} \label{L-3-2}
	
	Set $\xi_n \doteq  \omega_n^{-\delta}$ and
	$Q_n \doteq [\xi_n\slash 2,\;  \xi_n]$, where
	$\delta=\frac{1}{2-\alpha}$. One has that
	\begin{equation}
	\label{txi}
	|T_n(\xi_n)| =o(1).
	\end{equation}
\end{lemma}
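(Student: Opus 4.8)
The plan is to prove the pointwise bound on the flux $T_n$ at the scale $\xi_n$ by combining a mean-value argument on $Q_n$ with the first-order relations \eqref{ex1}--\eqref{ex2}, the dissipation identities \eqref{311}--\eqref{312}, and the weighted trace bound \eqref{314}. The whole estimate should be engineered so that every power of $\omega_n$ cancels exactly, because the choice $\delta=\frac{1}{2-\a}$, $\theta=\frac{1-\a}{2-\a}$ forces the two algebraic identities $\delta+\theta=1$ and $(1-\a)\delta=\theta$.

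First I would control $T_n$ in $L^2(Q_n)$. Writing $v_{2,n}=\ii\omega_n u_{2,n}-\omega_n^{-\theta}f_{1,n}$ from \eqref{ex1} and differentiating gives $T_n=(1+\ii\omega_n x^\a)u_{2,n}'-\omega_n^{-\theta}x^\a f_{1,n}'$. Since $x\asymp\xi_n$ on $Q_n$, the weight satisfies $x^{\a/2}\asymp\xi_n^{\a/2}=\omega_n^{-\a\delta/2}$, so the dominant term $\omega_n x^\a u_{2,n}'=\omega_n x^{\a/2}\cdot x^{\a/2}u_{2,n}'$ is bounded in $L^2(Q_n)$ by $\omega_n\xi_n^{\a/2}\|x^{\a/2}u_{2,n}'\|_{L^2(0,1)}$, which by \eqref{312} equals $\omega_n^{-\a\delta/2-\theta/2}o(1)$; the remaining pieces are of lower order. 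Thus $\|T_n\|_{L^2(Q_n)}=\omega_n^{-(\a\delta+\theta)/2}o(1)$.

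Next, by the mean value theorem there is $x_*\in Q_n$ with $|T_n(x_*)|\le|Q_n|^{-1/2}\|T_n\|_{L^2(Q_n)}$. Since $|Q_n|=\tfrac12\omega_n^{-\delta}$, this is $\sqrt2\,\omega_n^{(1-\a)\delta/2-\theta/2}o(1)$, and the exponent vanishes precisely because $(1-\a)\delta=\theta$; hence $|T_n(x_*)|=o(1)$. It then remains to transfer this to $x=\xi_n$ through $T_n(\xi_n)=T_n(x_*)+\int_{x_*}^{\xi_n}T_n'\,dx$, where by \eqref{ex2} and \eqref{ex1},
\[
\int_{x_*}^{\xi_n}T_n'\,dx=\ii\omega_n\int_{x_*}^{\xi_n}v_{2,n}\,dx-\omega_n^{-\theta}\int_{x_*}^{\xi_n}f_{2,n}\,dx.
\]
The source contributions I would handle by integrating by parts once so that the derivative falls on the $H^1$-small data $f_{1,n},f_{2,n}$, and the "oscillatory" part of the first integral by writing $v_{2,n}(x)=v_{2,n}(x_*)+\int_{x_*}^x v_{2,n}'$ and estimating $\int_{x_*}^{\xi_n}v_{2,n}'(\xi_n-s)\,ds$ by the weighted Cauchy--Schwarz bound $\int_{Q_n}|v_{2,n}'|\le(\int_{Q_n}x^{-\a})^{1/2}\|x^{\a/2}v_{2,n}'\|_{L^2(Q_n)}$; here the powers again cancel by $\delta+\theta=1$ and $(1-\a)\delta=\theta$, leaving $o(1)$.

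The main obstacle is the leading, "non-oscillatory" contribution $\ii\omega_n v_{2,n}(x_*)(\xi_n-x_*)$ (equivalently $\omega_n^2\int_{x_*}^{\xi_n}u_{2,n}$). Every crude estimate of this term — using only the uniform bound $|v_{2,n}|=\omega_n^{-\theta/2}o(1)$ from \eqref{314}, \eqref{v-uni} together with $\omega_n|Q_n|=\tfrac12\omega_n^{\theta}$ — produces the indeterminate quantity $\omega_n^{\theta/2}o(1)$, which need not tend to zero. This is exactly the signature that $\xi_n=\omega_n^{-1/(2-\a)}$ is the natural boundary-layer width: the phase of $v_{2,n}$ advances only by $O(1)$ across $Q_n$, so no cheap oscillatory gain is available and absolute-value bounds cannot decide the sign of the exponent. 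Closing the estimate therefore requires extracting genuine cancellation in $\int_{x_*}^{\xi_n}v_{2,n}$ from the second-order structure $\big((1+\ii\omega_n x^\a)u_{2,n}'\big)'=-\omega_n^2u_{2,n}+\cdots$ rather than from $\int|v_{2,n}|$, trading the factor $\omega_n^2$ against $|Q_n|$ and against the weighted smallness of $u_{2,n}'$ and $v_{2,n}'$. I expect this balancing — which is also what pins down the exponent $\delta=\frac{1}{2-\a}$, and hence the decay rate — to be the delicate heart of the proof.
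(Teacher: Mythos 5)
The first half of your argument is, in substance, the paper's proof. The paper bounds
$\min_{x\in Q_n}|T_n(x)|$ by $\big[4\omega_n^{\delta}\int_{Q_n}(|u_{2,n}'|^2+|x^\a v_{2,n}'|^2)\,dx\big]^{1/2}$, freezes the weight at $x\asymp\xi_n$ on $Q_n$, invokes \eqref{311}--\eqref{312}, and cashes in the identity $(1-\a)\delta=\theta$ to get $o(1)$ --- exactly your averaging step, up to your cosmetic substitution of $v_{2,n}'$ by $\ii\omega_n u_{2,n}'$ via \eqref{ex1}. So you have correctly produced a point $x_*\in Q_n$ with $|T_n(x_*)|=o(1)$, and your exponent bookkeeping there is right.

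The gap is in what you do next. You try to transport the bound from $x_*$ to the literal endpoint $\omega_n^{-\delta}$, correctly observe that the crude estimate of $\omega_n\int_{x_*}^{\xi_n}v_{2,n}\,dx$ only yields the indeterminate $\omega_n^{\theta/2}o(1)$, and then stop, declaring this transfer the ``delicate heart'' of the proof. It is not: the transfer is never needed. Everything downstream --- the integration of \eqref{ex2} over $(0,\xi_n)$ in \eqref{y2int2} and the claim \eqref{eb41} --- uses only $\xi_n\in[\tfrac12\omega_n^{-\delta},\,\omega_n^{-\delta}]$, never the exact value of $\xi_n$; so one simply relabels the minimizing point $x_*$ as $\xi_n$, which is what the paper implicitly does when it concludes \eqref{txi} from the bound on the minimum over $Q_n$. (If you insist on the literal endpoint, the transfer can still be closed, but only with the stronger information of Lemma \ref{l3-3}: $\|v_{2,n}\|_{L^2(Q_n)}^2\le C\,\omega_n^{\delta(2-\a)}\int_0^1x^{2-\a}|v_{2,n}|^2dx=\omega_n^{-1-\theta}o(1)$, hence $\omega_n|Q_n|^{1/2}\|v_{2,n}\|_{L^2(Q_n)}=\omega_n^{(1-\delta-\theta)/2}o(1)=o(1)$ since $\delta+\theta=1$; this is the same computation that proves \eqref{eb41}, and Lemma \ref{l3-3} is established independently of the present lemma, so there is no circularity.) As written, however, your proposal acknowledges it does not reach \eqref{txi}, so it is incomplete rather than wrong.
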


\begin{proof}
	By (\ref{311}) and (\ref{312}),  a direct computation gives
	\begin{equation}\label{eb1}
	\begin{array}{lcl}
	\min\limits_{x\in Q_n}
	| T_n(x)|  &\le& \displaystyle \Big[4\omega_n^{\delta}\int_{Q_n} (|u_{2,n}'|^2+|av_{2,n}'|^2)dx\Big]^{1\over2}
	\\ \noalign{\medskip} &\le & \displaystyle
	2\omega_n^{\delta\over2}\Big(
	\max\limits_{x\in Q_n}|x^{-\alpha}|\,\int_{Q_n} x^{\alpha} |u_{2,n}'|^2dx+ \max\limits_{x\in Q_n}|x^{\alpha}|\,\int_{Q_n} x^{\alpha} |v_{2,n}'|^2dx\Big)^{1\over2}
	\\ \noalign{\medskip}  &\le & \displaystyle
	\omega_n^{{1\over2} (\delta+\delta\alpha -2 - \theta )}o(1)
	+\omega_n^{{1\over2} ( \delta-\delta\alpha   - \theta )}o(1)=o(1).
	\end{array}
	\end{equation}
	Here we have used  $\delta=\frac{1}{2-\alpha}$ and   $\theta=\frac{1-\alpha}{2-\alpha}$ as given in (\ref{p-beta}).
	Hence, (\ref{txi}) holds.
\end{proof}

\vskip 4mm

Integrating \eqref{ex2} on  $(0,\, \xi_n)$ yields
\begin{equation}
\label{y2int2}
\ii \omega_n \int_0^{\xi_n} v_{2,n} dx +T_n(0)- T_n(\xi_n) = \omega_n^{-\theta} o(1).
\end{equation}

\noindent
 We now claim that
\begin{equation}
\label{eb41}
\Big|\omega_n \int_0^{\xi_n} v_{2,n} dx\Big|=o(1),
\end{equation}
 and prove it later.

 With (\ref{eb41}) at hand, let's substitute \eqref{txi} and \eqref{eb41} into \eqref{y2int2} to get
\begin{equation} \label{320}
|T_n(0)| =o(1).
\end{equation}

\noindent
Then, multiplying \eqref{ex2} with $\omega_n^{-\theta}u_{2,n}$ and integrating it on $(0,1)$,  along with (\ref{ex1}), (\ref{311}) and  (\ref{320}),
we obtain
\begin{equation}  \label{equa}
\|u_{2,n}' \|^2_{L^2(0,1)}  -  \|v_{2,n}\|^2_{L^2(0,1)} =o(1),
\end{equation}
which together with  the second estimate in (\ref{v-uni}) implies that
\begin{equation}\label{322}
\|u_{2,n}' \|^2_{L^2(0,1)}=o(1).
\end{equation}

\noindent
Taking $L^2(-1,0)-$inner product of (\ref{ex2})  with
$\omega_n^{-\theta}(x+1)u_{1,n}'$, and then integrating by parts, we have
\begin{equation}\label{ss31}
-|v_{1,n}(0)|^{2}+\|v_{1,n}\|^2_{L^2(-1,0)}-|u_{1,n}'(0)|^{2}+\|u_{1,n}'\|^2_{L^2(-1,0)}= o(1).
\end{equation}

\noindent
Thus, by (\ref{320}), the first estimate in  (\ref{v-uni}) and the transmission conditions  (\ref{b1}) at the interface, we
get
\begin{equation}\label{ss32}
\|v_{1,n}\|^2_{L^2(-1,0)}+\|u_{1,n}'\|^2_{L^2(-1,0)}=o(1).
\end{equation}

\noindent
Finally, by the first estimate in (\ref{v-uni}), (\ref{322}) and (\ref{ss32}), we have achieved the contradiction:
\begin{equation}\label{ss30}
\|U_{n}\|_\mathcal{H}=o(1).
\end{equation}

Now, in order to     complete the proof, it is sufficient to show (\ref{eb41}) holds.

Indeed, by the H\"{o}lder inequality, we have
\begin{eqnarray}
\Big|\omega_n \int_0^{\xi_n} v_{2,n} dx\Big|&\leq&  C \omega_n^{1-\frac{\delta}{2}}\Big(\int_0^{\xi_n}|v_{2,n}|^2dx\Big)^{\frac{1}{2}}\cr
&\leq& \displaystyle C \omega_n^{1-\frac{\delta}{2}} \Big(\Re \int_0^1x v_{2,n}\overline{v_{2,n}'}dx\Big)^{\frac{1}{2}}\cr
&\leq& \displaystyle C \omega_n^{1-\frac{\delta}{2}}  \Big(\int_0^1x^\alpha |v_{2,n}'|^2dx \Big)^{\frac{1}{4}} \Big(\int_0^1 x^{2-\alpha} |v_{2,n}|^2dx \Big)^{\frac{1}{4}}.
\end{eqnarray}
If we can show  the estimate (\ref{316+}) in the following Lemma \ref{l3-3} holds, then above inequality leads to
\begin{equation}
\Big|\omega_n \int_0^{\xi_n} v_{2,n} dx\Big|
\leq\omega_n^{1-\frac{\delta}{2}} \omega_n^{-\frac{\theta}{4}}\omega_n^{\frac{-2-\theta}{4}}o(1)=o(1)\quad ({\rm using\;\; (\ref{311})\; and\; (\ref{316+})})
\end{equation}
 since
$$ 1-\frac{\delta}{2} -\frac{\theta}{4} - \frac{2+\theta}{4} = \frac{1}{2}-\frac{1}{2(2-\a)} - \frac{1-\a}{2(2-\a)} = 0.
$$

\noindent
Hence, (\ref{eb41}) holds.

\vskip 4mm
The rest is devoted to showing the following Lemma \ref{l3-3}.
\begin{lemma}\label{l3-3}
	Let $0 \leq\alpha<1$. Then it holds that	
	\begin{equation}\label{316+}
	\omega_n\int_0^1 x^{2-\alpha} |v_{2,n}|^2dx=\omega_n^{-1-\theta} o(1).
	\end{equation}
\end{lemma}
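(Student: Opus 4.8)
The plan is to test the second resolvent equation \eqref{ex2}, written as $\ii\omega_n v_{2,n}-T_n'=\omega_n^{-\theta}f_{2,n}$ with $\|f_{2,n}\|_{L^2(0,1)}=o(1)$, against the multiplier $x^{2-\alpha}\overline{v_{2,n}}$, and to recover the target quantity from the \emph{imaginary} part of the resulting identity. First I would integrate over $(0,1)$ and integrate the $T_n'$ term by parts. Both endpoint contributions drop out: the weight $x^{2-\alpha}$ annihilates $x=0$ (since $2-\alpha>0$), while $v_{2,n}(1)=v_n(1)=0$ (as $v_n\in H_0^1(-1,1)$) annihilates $x=1$. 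Substituting $T_n=u_{2,n}'+x^\alpha v_{2,n}'$ and expanding then gives
\begin{align*}
\ii\omega_n\int_0^1 x^{2-\alpha}|v_{2,n}|^2\,dx
&=-(2-\alpha)\int_0^1 x^{1-\alpha}u_{2,n}'\,\overline{v_{2,n}}\,dx
-(2-\alpha)\int_0^1 x\,v_{2,n}'\,\overline{v_{2,n}}\,dx \\
&\quad-\int_0^1 x^{2-\alpha}u_{2,n}'\,\overline{v_{2,n}'}\,dx
-\int_0^1 x^2|v_{2,n}'|^2\,dx
+\omega_n^{-\theta}\int_0^1 f_{2,n}\,x^{2-\alpha}\,\overline{v_{2,n}}\,dx.
\end{align*}

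The hard part — and the reason a direct modulus estimate fails — is the term $\int_0^1 x^2|v_{2,n}'|^2\,dx$. By the dissipation bound \eqref{311} and $x^2\le x^\alpha$ on $(0,1)$ it is only of size $\omega_n^{-\theta}o(1)$, which is a full factor $\omega_n$ too large to match the desired estimate. The decisive point is that this term is \emph{real and nonnegative}: taking imaginary parts of the identity turns the left-hand side into exactly $\omega_n\int_0^1 x^{2-\alpha}|v_{2,n}|^2\,dx$ and makes the offending term disappear.

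It then remains to bound the imaginary parts of the four surviving integrals, for which I would use H\"older's inequality, splitting every weight symmetrically about the factor $x^{\alpha/2}$ carried by the dissipation. Writing $S:=\int_0^1 x^{2-\alpha}|v_{2,n}|^2\,dx$, the third integral factors as $x^{2-\alpha}=x^{2-2\alpha}\,x^{\alpha}$ with $x^{2-2\alpha}\le 1$, so it is bounded by $\|x^{\alpha/2}u_{2,n}'\|_{L^2}\|x^{\alpha/2}v_{2,n}'\|_{L^2}=\omega_n^{-1-\theta}o(1)$ via \eqref{311}--\eqref{312}. The first integral is split as $x^{1-\alpha}=x^{\alpha/2}x^{1-3\alpha/2}$ and controlled by $\|x^{\alpha/2}u_{2,n}'\|_{L^2}\big(\int_0^1 x^{2-3\alpha}|v_{2,n}|^2\,dx\big)^{1/2}$; here the weighted Hardy inequality (Lemma \ref{l-2-2}, legitimate since $2-3\alpha>-1$ for $\alpha<1$) bounds $\int_0^1 x^{2-3\alpha}|v_{2,n}|^2\,dx$ by $C\int_0^1 x^{\alpha}|v_{2,n}'|^2\,dx=\omega_n^{-\theta}o(1)$, again yielding $\omega_n^{-1-\theta}o(1)$. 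The remaining two integrals — $\int_0^1 x\,v_{2,n}'\overline{v_{2,n}}\,dx$ and the forcing term — are the ones that cannot be absorbed into a pure power of $\omega_n$: each produces a factor $S^{1/2}$, namely $\|x^{\alpha/2}v_{2,n}'\|_{L^2}S^{1/2}=\omega_n^{-\theta/2}o(1)\,S^{1/2}$ and $\omega_n^{-\theta}\|f_{2,n}\|_{L^2}S^{1/2}=\omega_n^{-\theta}o(1)\,S^{1/2}$ respectively (using $x^{4-2\alpha}\le x^{2-\alpha}$ for the latter).

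Collecting these estimates leaves a self-improving inequality of the form
\begin{equation*}
\omega_n S\le \omega_n^{-1-\theta}o(1)+C\,\omega_n^{-\theta/2}\,S^{1/2}\,o(1).
\end{equation*}
Viewing this as a quadratic inequality in $S^{1/2}$ and solving it gives $S^{1/2}\le C\,\omega_n^{-1-\theta/2}o(1)$, whence $S=\omega_n^{-2-\theta}o(1)$, i.e.\ $\omega_n\int_0^1 x^{2-\alpha}|v_{2,n}|^2\,dx=\omega_n^{-1-\theta}o(1)$, which is \eqref{316+}. I expect the only genuine subtlety beyond the bookkeeping to be the two steps already flagged: discarding the real term $\int_0^1 x^2|v_{2,n}'|^2\,dx$ by passing to imaginary parts, and closing the argument through the self-improving inequality rather than a one-shot bound.
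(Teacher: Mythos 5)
Your proposal is correct and follows essentially the same route as the paper: the multiplier $x^{2-\alpha}\overline{v_{2,n}}$, passing to imaginary parts to discard the real term $\int_0^1 x^2|v_{2,n}'|^2\,dx$, the weighted Hardy inequality for the $x^{2-3\alpha}$ weight, and H\"older splits around $x^{\alpha/2}$. The only cosmetic difference is at the close: the paper absorbs the $S^{1/2}$ terms via Young's inequality with a small parameter $\varepsilon$, whereas you solve the resulting quadratic inequality in $S^{1/2}$ directly — the two are equivalent.
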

\begin{proof}
	Multiplying \eqref{ex2} with $x^{2-\alpha} \overline{v_{2,n}}$ and integrating it from $0$ to $1$, we get
	$$
	\begin{array}{ll}
	&\displaystyle
	\ii \omega_n \int_0^1 x^{2-\alpha}|v_{2,n}|^2dx + \int_0^1T_n\, \overline{((2-\alpha)x^{1-\alpha}  v_{2,n}+ x^{2-\alpha} v_{2,n}')}dx
	\\ \noalign{\medskip}  =& \displaystyle
	\ii \omega_n \int_0^1 x^{2-\alpha}|v_{2,n}|^2dx +\int_0^1 \big[(2-\alpha) x^{1-\alpha}  u_{2,n}' \overline{v_{2,n}}+ x^{2-\alpha} u_{2,n}' \overline{v_{2,n}'}
\\ \noalign{\medskip}  &\displaystyle
+ (2-\alpha) x v_{2,n}'\overline{ v_{2,n}}+ x^{\alpha +2-\alpha} |v_{2,n}'|^2\big]dx
	\\ \noalign{\medskip}  =& \displaystyle \omega_n^{-\theta} \int_0^1x^{2-\alpha} f_{2,n}\overline{v_{2,n}}dx .
	\end{array}
	$$
	By taking the imaginary parts of the above equality and using Cauchy-Schwarz inequality, we obtain
	\begin{equation}
	\label{multiply}
\begin{array}{ll}& \displaystyle
\omega_n \int_0^1 x^{2-\alpha}|v_{2,n}|^2dx
 \\ \noalign{\medskip}  \displaystyle
	\le&\displaystyle   2\Big|
	\int_0^1 [(2-\alpha)x^{1-\alpha}  u_{2,n}' \overline{v_{2,n}}\!+\! x^{2-\alpha} u_{2,n}' \overline{v_{2,n}'}\!+\! (2-\alpha) x v_{2,n}'\overline{ v_{2,n}}  ]dx\Big|+ \omega_n^{-2\theta-1} o(1).
	\end{array}
\end{equation}
	
	In the following,  in order to show (\ref{316+}),  we shall estimate the three terms on the right hand side of (\ref{multiply}), respectively.
	
	\noindent{\bf Observation I.}
	By H\"{o}lder inequality, we have
	\begin{equation}\label{338}
	\Big|\int_0^1   x^{1-\alpha}  u_{2,n}' \overline{v_{2,n}}dx\Big|\le
	\Big(\int_0^1 x^\alpha |u_{2,n}'|^2dx\Big)^{\frac{1}{2}} \Big(\int_0^1 x^{2-3\alpha} |v_{2,n}|^2dx\Big)^{\frac{1}{2}}.
	\end{equation}
	Note that $2-3\alpha>-1$. Then by Lemma \ref{l-2-2} and \eqref{311}, one has that
	\begin{equation}\label{339}
	\Big(\int_0^1 x^{2-3\alpha} |v_{2,n}|^2dx\Big)^{\frac{1}{2}} \le
	C\| x^{\alpha \over2} v_{2,n}' \|_{L^2(0,1)} = \omega_n^{-{
		\theta\over2}} o(1).
	\end{equation}
	Then, substituting (\ref{312}), (\ref{339}) into (\ref{338}),  we obtain
	\begin{equation}\label{3381}
	\Big|\int_0^1   x^{1-\alpha}  u_{2,n}' \overline{v_{2,n}}dx\Big|  \leq \omega_n^{-1-\theta} o(1).
	\end{equation}
	
	\noindent	{\bf Observation II.} We have
	\begin{equation}
	\Big|\int_0^1 x^{2-\alpha} u_{2,n}' \overline{v_{2,n}'}dx\Big|\leq \omega_n^{-1-\theta}o(1).
	\end{equation}
	
	\noindent
	In fact,  by the H\"{o}lder inequality, we have
	\begin{eqnarray*}
	\Big|\int_0^1  x^{2-\alpha} u_{2,n}' \overline{v_{2,n}'} dx
	\Big|&=&\Big|\int_0^1  x^{2-2\alpha} x^{\frac{\alpha}{2}}u_{2,n}'x^{\frac{\alpha}{2}} \overline{v_{2,n}'} dx
	\Big|
\\ \noalign{\medskip}  \displaystyle
	&\le&  \| x^{\alpha \over 2}  u_{2,n}'\|_{L^2(0,1)}\| x^{\alpha \over 2}  v_{2,n}'\|_{L^2(0,1)} = \omega_n^{-1-\theta}o(1).
	\end{eqnarray*}
	Here we have used the fact that $0\leq \alpha<1$.
	
	\noindent	{\bf Observation III.}  We  have
	\begin{equation}
	\Big|\int_0^1	x v_{2,n}'\overline{ v_{2,n}}dx\Big| \leq \varepsilon \omega_n\| x^{2-\alpha \over2} v_{2,n} \|_{L^2(0,1)}^2
	+  \omega_n^{-1-  \theta  } o(1),
	\end{equation}
	where $\varepsilon$ is some positive constant which can be  chosen arbitrarily. In fact, by H\"{o}lder inequality, for a positive constant $\varepsilon$, there exists $C_\varepsilon$ such that
	$$
	\begin{array}{l}\displaystyle \hskip 5mm
	\Big|\int_0^1   x v_{2,n}'\overline{ v_{2,n}}  dx\Big|
	\\ \noalign{\medskip}  \displaystyle
	\le
	\|   x^{\alpha \over2}   v_{2,n}'\|_{L^2(0,1)}
	\|   x^{2-\alpha \over2}   v_{2,n}\|_{L^2(0,1)}
	\\ \noalign{\medskip}  \displaystyle
	\le  \varepsilon \omega_n\| x^{2-\alpha \over2}  v_{2,n} \|_{L^2(0,1)}^2
	+ C_\varepsilon \omega_n^{-1} \|   x^{ \alpha \over2}   v_{2,n} '\|_{L^2(0,1)} ^2
	\\ \noalign{\medskip}  \displaystyle
	=   \varepsilon \omega_n\| x^{2-\alpha \over2} v_{2,n} \|_{L^2(0,1)}^2
	+  \omega_n^{-1-  \theta  } o(1).
	\end{array}$$
	
	\noindent
	By substituting {\bf Observation I, II, III} into the right hand side of (\ref{multiply}), we obtained the desired result (\ref{316+}).
\end{proof}

\section{Conclusion}
In this paper, we obtain a sharper estimate of the decay rate of solution to
system (\ref{system}) when the damping coefficient function $a(x)$ is equivalent to $x^\a, 0\le\a<1$ near the interface $x=0$, which is consistent with the existing optimal decay rate when $\a=0$, and with the known exponential decay rate when  $\a=1$.  We summarize the stability results for (\ref{system}) in the following table.

\begin{center}
	\begin{tabular}{| c | c | c | c |}
		\hline
		$\a$ &  damping coefficient $a(x)$ & decay rate of solution \\ \hline
		$0$ &  $x^\alpha$ & optimal polynomial decay rate $t^{-2}$ \\ \hline
		$(0,1)$  & $x^\a$ &polynomial decay rate $t^{-\frac{2-\a}{1-\a}}$ \\ \hline
		$\ge 1$ &$x^\alpha$ & exponential decay rate \\ \hline				
	\end{tabular}
\end{center}

We would like to point out that the above results are proved only for the damped region which is an interval including one end point of the spatial domain. It remains as an open question for the general case where the damping coefficient function $b(x)$ is supported on a proper interval $(x_1, x_2)$ with $-1<x_1<x_2<1$, and behaves like $(x-x_1)^\a$ and
$(x-x_2)^\beta$ near the interfaces $x=x_1, x_2$ respectively. The optimality of the polynomial decay rate obtained in this paper is another open question
as stated in the Introduction.

\vskip 4mm

\end{document}